%
%
%
%
\documentclass[11pt]{amsart}
\usepackage[T1]{fontenc}
\usepackage{amsmath}
\usepackage{mathrsfs}
\usepackage{amssymb}
\usepackage{framed}
\usepackage{graphicx}


\newtheorem{theorem}{Theorem}[section]
\newtheorem{lemma}[theorem]{Lemma}

\newtheorem{definition}[theorem]{Definition}

\newtheorem{claim}[theorem]{Claim}
\newtheorem{observation}[theorem]{Observation}

\newtheorem{remark}[theorem]{Remark}

\newtheorem{proposition}[theorem]{Proposition}

\newcommand{\im}{\operatorname{Im}}
\newcommand{\re}{\operatorname{Re}}

\newcommand{\Rn}{\mathbb{R}^n}
\newcommand{\R}{\mathbb{R}}

\newcommand{\Cn}{\mathbb{C}^n}
\newcommand{\C}{\mathbb{C}}

\newcommand{\Z}{\mathbb{Z}}

\newcommand{\N}{\mathbb{N}}

\numberwithin{equation}{section}

\newcommand{\abs}[1]{\left|#1\right|}

%


 \def\HollowBox #1#2{{\dimen0=#1 \advance\dimen0 by -#2
       \dimen1=#1 \advance\dimen1 by #2
        \vrule height #1 depth #2 width #2
        \vrule height 0pt depth #2 width #1
        \llap{\vrule height #1 depth -\dimen0 width \dimen1}%
       \hskip -#2
       \vrule height #1 depth #2 width #2}}
 

\parindent=.3in

\begin{document}

\title[A note on boundary uniqueness]{A note on a conjecture concerning boundary uniqueness}

\author{Abtin Daghighi and Steven G.\ Krantz}

\subjclass[2000]{Primary 
32H12; 
35A02; 
32A10 
}


\keywords{Unique continuation, boundary uniqueness}

\begin{abstract} \rm
We consider the following conjecture (from Huang, et al):
Let $\Delta^+$ denote the upper half disc in $\C$ and let
$\gamma = ( - 1, 1)$ (viewed as an interval in the real axis in $\C$).
Assume that $F$ is a holomorphic function on $\Delta^+$ with continuous extension up to $\gamma$ 
such that $F$ maps $\gamma$ into $\{|\im z|\leq C|\re z|\},$ for some positive $C.$
If $F$ vanishes to infinite order at $0$ then $F$ vanishes identically.
\\
\\
We show that given the conditions of the conjecture, either $F\equiv 0$ or
there is a sequence in $\Delta^+$, converging to $0,$ along which $\im F/\re F$ (defined where $\re F\neq 0$) is unbounded.
\end{abstract}

\maketitle

\section{Introduction} 
The following is a result of Alinhac et al\ \cite{alin1}. 
\begin{theorem}[Alinhac et al\ \cite{alin1}, p.635]\label{alin1thm}  \sl
Let $W\subset\C$ be an open neighborhood of $0$, let $W^+:=W\cap\{\im \zeta >0\}$,
and let $A\subset \Cn$ be a totally real $C^2$-smooth submanifold. Let $F\in {\mathcal O}(W^+)$ and continuous up to the boundary 
such that $F$ maps $W\cap \{ \im \zeta=0\}$ into $A$. If $F$ vanishes to infinite order 
(definition treated in Section 2) at the origin then $F$ vanishes identically in the connected component of the origin in $\overline{W^+}.$
\end{theorem}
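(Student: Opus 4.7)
The strategy is to extend $F$ across $\gamma$ using an almost-holomorphic straightening of $A$ combined with Schwarz-type reflection, and then to combine the infinite-order vanishing at $0$ with a quasi-analytic unique continuation to conclude $F\equiv 0$.

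First I would normalize. After translating we may assume $F(0)=0\in A$, and after a complex-linear change of coordinates on $\Cn$ we may assume $T_0 A = \Rn$, so that $A$ is locally a $C^2$ graph $\{y=h(x)\}$ with $h(0)=0$ and $Dh(0)=0$. The model case $h\equiv 0$ (so $A=\Rn$) is already classical: the hypothesis reads $\im F|_\gamma\equiv 0$, the Schwarz reflection $\tilde F(\zeta):=\overline{F(\bar\zeta)}$ gives a holomorphic extension to a disc around $0$, and the infinite-order vanishing at $0$ combined with standard interior unique continuation forces $F\equiv 0$.

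For the general $C^2$ case I would construct an almost-holomorphic straightening $\Phi$ of $A$ near $0\in\Cn$: a $C^2$ map into $\Cn$ with $\Phi(A)\subset\Rn$ and $\bar\partial\Phi$ vanishing on $A$ to the highest order that the $C^2$ data permit (via a Whitney/Borel extension with a cutoff). Set $G:=\Phi\circ F$ on $W^+$. The chain rule and holomorphy of $F$ give
\[
\partial_{\bar\zeta} G_k(\zeta)\;=\;\sum_j (\partial_{\bar z_j}\Phi_k)(F(\zeta))\,\overline{F_j'(\zeta)},
\]
so $\bar\partial G$ is flat on $\gamma$ (because $F(\gamma)\subset A$ and $\bar\partial\Phi$ is flat on $A$), $\im G\equiv 0$ on $\gamma$ (because $\Phi(A)\subset\Rn$), and $G$ inherits flatness at $0$ from $F$. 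Componentwise Schwarz reflection $\tilde G_k(\zeta):=\overline{G_k(\bar\zeta)}$ for $\im\zeta<0$ then glues $G$ to a full neighborhood of $0$ as an almost-holomorphic function, flat at $0$, with $\bar\partial$ flat on $\gamma$.

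The argument then reduces to a boundary quasi-analyticity statement: a $C^2$ almost-holomorphic function on a disc about $0\in\C$ that is flat at $0$ and whose $\bar\partial$ is flat along $\gamma$ must vanish identically nearby. Once this is in hand, $G\equiv 0$ forces $F(\zeta)\in A$ near $0$, and the totally real condition combined with the holomorphy of $F$ gives $dF_\zeta(T_\zeta\C)\subset T_{F(\zeta)}A\cap i\,T_{F(\zeta)}A=\{0\}$, so $F$ is locally constant; with $F(0)=0$ this yields $F\equiv 0$ near $0$, and interior holomorphic unique continuation propagates the vanishing to the full connected component of $0$ in $\overline{W^+}$. The main obstacle is precisely the quasi-analyticity step: for generic smooth data it is false (as $\exp(-1/y^2)$ illustrates), so the proof must quantitatively exploit that $G$ arises as the composition of $\Phi$ with a genuine holomorphic $F$. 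This is the microlocal heart of the theorem, typically handled via an FBI transform or a Carleman-type estimate, and made delicate by the limited $C^2$ regularity of $A$.
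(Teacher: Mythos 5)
The paper does not actually prove this statement: it is quoted from Alinhac--Baouendi--Rothschild and used as a known ingredient, so your sketch can only be measured against the published argument, and measured that way it has a genuine gap at its core. Your entire reduction funnels into the ``boundary quasi-analyticity statement'' --- that an almost-holomorphic function flat at $0$ whose $\bar\partial$ is flat along $\gamma$ must vanish identically --- and you then concede that this is the main obstacle, that it fails for generic flat data, and that it would have to be ``handled via an FBI transform or a Carleman-type estimate.'' That step \emph{is} the theorem; deferring it to an unspecified microlocal black box leaves the result unproved. Worse, the reduction does not even deliver the hypotheses of that black box. Since $A$ is only $C^2$, a Whitney/Borel extension can make $\bar\partial\Phi$ vanish on $A$ only to first order, not to infinite order, so $\bar\partial G$ is not flat on $\gamma$. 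And since $F$ is only assumed continuous up to the boundary, $F'$ need not extend to $\gamma$, so your chain-rule identity $\partial_{\bar\zeta}G_k=\sum_j(\partial_{\bar z_j}\Phi_k)(F)\,\overline{F_j'}$ gives no control of $\bar\partial G$ near $\gamma$ at all; the reflected function $\tilde G$ is merely continuous across $\gamma$, so the object you would feed into the quasi-analyticity lemma is not the ``$C^2$ almost-holomorphic function on a disc'' that the lemma would require.

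The published proof takes a different and more robust route, one that is echoed by the other results surveyed in this paper: a totally real $C^2$ submanifold $A\subset\Cn$ admits, near each of its points, a nonnegative strictly plurisubharmonic function $\rho$ of class $C^2$ vanishing exactly on $A$ (comparable to the squared distance to $A$). Then $u:=\rho\circ F$ is subharmonic on $W^+$, continuous up to $\gamma$, vanishes on $\gamma$ because $F(\gamma)\subset A$, and vanishes to infinite order at $0$ because $F$ does and $\rho=O(|w|^2)$ near $F(0)=0$; the conclusion is then extracted by a local Hopf-lemma/harmonic-measure estimate for such subharmonic functions (compare the Baouendi--Rothschild local Hopf lemma quoted as Theorem \ref{barot}). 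This bypasses reflection and almost-holomorphic extension entirely, which is precisely why only $C^2$ regularity of $A$ and mere continuity of $F$ up to the boundary are needed. To salvage your approach you would have to both upgrade the boundary regularity (or work with a distributional $\bar\partial$) and actually prove the quasi-analyticity lemma, which is where all of the work lies.
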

There is a related result due to Lakner \cite{lakner} (where it is pointed out that $f(\zeta)=\exp(-e^{i\pi/4}/\sqrt{\zeta})$
is holomorphic on $W^+$ and extends $C^{\infty}$-smoothly to $\overline{W^+},$ yet vanishes to infinite order at $0$).
\begin{theorem}[Lakner \cite{lakner}]\label{laknerthm}	 \sl
Let $\Delta\subset\C$ be the unit disc, let $\Delta^+:=\Delta\cap\{\im \zeta >0\},$
and let $A\subset \C$ be a double cone with vertex at $0$ in the sense that
$A=\{0\}\cup \{\zeta=re^{i\theta},r\in \R,\theta\in [a,b] \},$ for real numbers $a, b$ with $a-b < \pi$. 
Let $F\in {\mathcal O}(\Delta^+)$ and continuous up to the boundary 
such that $F$ maps $\Delta\cap \{ \im \zeta=0\}$ into $A$. If $F|_{\Delta\cap \{ \im \zeta=0\}}$ 
has an isolated zero at $0$ then $F$ does not
vanish to infinite order\footnote{Lakner \cite{lakner} defines vanishing to infinite order at $0$, by $F(\zeta)= O(\zeta^k),$ for all $k\in \N$.}
at $0$. In particular, if $F$ vanishes to infinite order at $0$, then
there is a sequence in $\Delta\cap \{ \im \zeta=0\},$ converging to $0,$ and consisting of zeros of $F.$ 
\end{theorem}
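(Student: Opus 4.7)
I would argue by contradiction: suppose $F\not\equiv 0$ vanishes to infinite order at $0$ while $F|_\gamma$ has an isolated zero at $0$, and aim to derive a contradiction.

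\emph{Step 1 (reduction to a single sector).} The isolated-zero hypothesis gives $\epsilon>0$ with $F(x)\ne 0$ for $x\in\gamma$, $0<|x|<\epsilon$. Each of the two half-intervals on either side of $0$ is connected, so continuity forces $F$ to send each of them into one of the two connected components $S_{\pm}=\pm S_{+}$ of $A\setminus\{0\}$. Replacing $F$ by $F^{2}$ collapses both components into a single sector of opening $2(b-a)<2\pi$ (and preserves all hypotheses, since $F^{2}$ again vanishes to infinite order at $0$ and $F^{2}|_\gamma$ inherits an isolated zero at $0$). After multiplying by $e^{-i(a+b)}$, one may assume $F^{2}$ sends $\gamma\cap\{0<|\zeta|<\epsilon\}$ into $\{re^{i\theta}:r\ge 0,\ |\theta|\le b-a\}$. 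Since $F^{2}$ is non-vanishing on $\gamma_{\epsilon}\setminus\{0\}$ and has discrete interior zeros, a simply-connected neighborhood $\Omega\subset\Delta^{+}\cap\{|\zeta|<\epsilon\}$ of $\gamma_{\epsilon}\setminus\{0\}$ on which $F^{2}$ is nowhere zero exists, and one may define $G:=(F^{2})^{\beta}$ with $\beta:=\pi/(2(b-a))>\tfrac{1}{2}$ via a continuous branch. The resulting $G$ extends continuously up to $\gamma_{\epsilon}$, maps it into the closed right half-plane, satisfies $G(0)=0$, and still vanishes to infinite order at $0$.

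\emph{Step 2 (forcing $G\equiv 0$).} I would then split according to the zero set of $\re G$ on $\gamma_{\epsilon}$. If $\re G\equiv 0$ on some subinterval of $\gamma$, then $iG$ is real-valued there and the classical Schwarz reflection principle extends $iG$ holomorphically to a full (two-sided) disc; since this extension still vanishes to infinite order at the now-interior point $0$, the identity theorem forces $iG\equiv 0$, hence $F\equiv 0$, giving the contradiction. Otherwise $\re G>0$ on $\gamma_{\epsilon}\setminus\{0\}$, and one combines a Hopf boundary-point argument applied to $\re G$ (after adding a suitable harmonic corrector to impose positivity in the interior of $\Delta^{+}\cap\{|\zeta|<\epsilon\}$) with the infinite-order bounds $|G(\zeta)|\le C_{N}|\zeta|^{N}$ and a Phragm\'en--Lindel\"of estimate in the half-disc.

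\emph{Main obstacle.} Step~2 in the second subcase is the delicate part: the Hopf lemma requires interior sign information on $\re G$, which ``$\re G\ge 0$ on $\gamma$'' does not by itself supply --- indeed $\re\exp(-e^{i\pi/4}/\sqrt{\zeta})$ already oscillates in sign in $\Delta^{+}$ arbitrarily close to $0$. Any harmonic corrector used to restore a global sign introduces its own non-zero outward normal derivative at $0$, so the infinite-order decay must be iterated through a Phragm\'en--Lindel\"of comparison in the half-disc to extract an actual contradiction.
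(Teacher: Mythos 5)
First, a point of reference: the paper does not prove Theorem \ref{laknerthm} at all --- it is quoted from Lakner \cite{lakner} as background --- so there is no in-paper proof to compare against, and your proposal must stand on its own. It does not. The decisive problem is that Step 2 is a restatement of the difficulty rather than a proof. After your reduction you hold a function $G$, holomorphic on a one-sided domain with $0$ on the boundary, vanishing to infinite order at $0$, with $\re G\geq 0$ on the real part of the boundary, and you must conclude $G\equiv 0$. That assertion \emph{is} the local Hopf lemma of Baouendi--Rothschild and Huang et al.\ (Theorems \ref{bar1thm} and \ref{huangthm} of the paper), i.e.\ the entire analytic content of the result you are proving; your ``Main obstacle'' paragraph then correctly observes that the classical Hopf lemma does not apply (no interior sign information, as $\exp(-e^{i\pi/4}/\sqrt{\zeta})$ shows), and the promised ``harmonic corrector plus Phragm\'en--Lindel\"of iteration'' is never exhibited. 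So the argument does not close. The first subcase of Step 2 is also wrong as written: reflecting $iG$ across a subinterval of $\gamma_{\epsilon}\setminus\{0\}$ on which $\re G\equiv 0$ does not make $0$ an \emph{interior} point of the extended domain ($0$ remains on its boundary), so infinite-order vanishing at $0$ does not trigger the identity theorem there.

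There is also a geometric gap in Step 1 that would persist even if you imported Theorem \ref{bar1thm} as a black box. To define a branch of $(F^{2})^{\beta}$ you must pass to a domain $\Omega\subset\Delta^{+}$ with $0\in\partial\Omega$ on which $F\neq 0$; but the interior zeros of $F$ may accumulate at $0$, forcing $\Omega$ to be an arbitrarily sharp cusp there. Infinite-order vanishing is not a conformally invariant notion on such cusps: the Riemann map onto the half-disc need not be H\"older of any positive exponent, and on a sharp enough cusp a nonzero bounded holomorphic function can vanish to infinite order at the tip. This is exactly why the paper's own transfer machinery (Remark \ref{nyremark}, Claim \ref{nyclaim}, Lemma \ref{lm3}) insists that the subdomain $\widehat{B}$ have $C^{0,\alpha}$ boundary --- the H\"older exponent of $\Psi$ is what converts infinite-order vanishing on $\widehat{B}$ into infinite-order vanishing on $\Delta^{+}$ --- and your $\Omega$ comes with no such regularity. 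A smaller but real issue: $\gamma_{\epsilon}\setminus\{0\}$ has two components, and a continuous branch of $\arg(F^{2})$ may land in different $2\pi$-translates of $[2a,2b]$ on the two sides, so $G$ need not map both sides into the same half-plane. To repair the proof you would need either Lakner's original self-contained argument or the full strength of the local Hopf lemma together with a H\"older-regular choice of $\Omega$.
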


These results were followed up and refined see e.g.\ Baouendi \& Rothschild \cite{bar1}, \cite{baroth}, and Huang et al\ \cite{huang}.  
We mention the following:
\begin{theorem}[Baouendi \& Rothschild \cite{bar1}]\label{bar1thm}  \sl
If $f(\zeta)$ is a holomorphic function 
in a domain of the upper half plane with $0$ on the boundary, continuous up to the boundary,
vanishing to infinite order at $0$, and $\re f(x)\geq 0$ (with $x:= \re \zeta$), then $f$ must vanish identically.
\end{theorem}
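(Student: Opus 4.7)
The plan is to reduce the statement to Lakner's theorem (Theorem 1.2) via a Cayley transformation and a square-root extraction, and then to Alinhac's theorem (Theorem 1.1) via a limiting argument that collapses the target cone to a totally real line. The square-root step, particularly the branch selection in the presence of possibly accumulating interior zeros of $f$, is where I expect the chief technical difficulty.

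After shrinking $\Omega$, I may assume $f\in\mathcal{O}(\Delta_r^+)\cap C(\overline{\Delta_r^+})$ with $|f|<1/2$ throughout, $\re f(x)\geq 0$ on $(-r,r)$, and $|f(\zeta)|=o(|\zeta|^k)$ as $\zeta\to 0$ for every $k$. I would then set $G(\zeta):=(1-f(\zeta))/(1+f(\zeta))$: this is holomorphic on $\Delta_r^+$, continuous up to $[-r,r]$, satisfies $G(0)=1$, and $1-G=2f/(1+f)$ vanishes to infinite order at $0$. Moreover, the hypothesis $\re f(x)\geq 0$ is equivalent to $|G(x)|\leq 1$ on $(-r,r)$, so $1-G$ maps the diameter into the closed disc $\{|w-1|\leq 1\}\subset\{\re w\geq 0\}$.

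Assuming for contradiction $f\not\equiv 0$, the interior zeros of $1-G$ are isolated and, by shrinking $r$, can be removed from the domain. The principal-branch square root $H(\zeta):=\sqrt{1-G(\zeta)}$ is then a well-defined holomorphic function on $\Delta_r^+$, continuous to $(-r,r)$, vanishing to infinite order at $0$, and mapping the diameter into the sector $\{\arg w\in[-\pi/4,\pi/4]\}$, which lies in a double cone of total aperture $\pi/2<\pi$. Lakner's theorem then produces a sequence $x_n\in(-r,r)$, $x_n\to 0$, with $H(x_n)=0$, hence $f(x_n)=0$. To upgrade this to $f\equiv 0$, I would iterate the square-root extraction: each iteration halves the aperture of the target double cone, and a normal-family compactness argument applied to suitably normalized iterates passes to a limit whose boundary target is the positive real axis, a totally real submanifold of $\C$; Alinhac's theorem then forces $f\equiv 0$, contradicting the assumption.

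The main obstacle lies in the square-root construction: in the interior of $\Delta_r^+$ the function $1-G$ is only known to lie in a bounded set around $0$ and may cross any branch cut emanating from the origin, making the principal square root ill-defined there. Surmounting this requires either (i) extracting a Blaschke-type factor encoding the interior zeros of $1-G$ before the square-root step, or (ii) executing the construction on a slit domain and extending $H$ across the slits by a Phragm\'en--Lindel\"of argument. The limit step collapsing the cone onto the real axis must likewise be carried out so that continuity up to the diameter and the infinite-order vanishing at $0$ both survive, which I expect to be the most delicate bookkeeping.
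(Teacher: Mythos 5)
This statement is quoted by the paper from Baouendi--Rothschild \cite{bar1} and is not proved in the paper itself, so your argument must stand on its own; it does not, because of two genuine gaps.

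The first and fatal gap is the square-root step. You assert that the interior zeros of $1-G$ (equivalently, of $f$) ``are isolated and, by shrinking $r$, can be removed from the domain.'' But if $f\not\equiv 0$, its zeros in $\Delta_r^+$ may accumulate precisely at the boundary point $0$ --- this is exactly the phenomenon Lakner's theorem is built around --- so no shrinking of $r$ produces a zero-free half-disc; and deleting the zeros destroys simple connectivity, so a single-valued branch of $\sqrt{1-G}$ fails to exist whenever some zero has odd multiplicity. The obstruction is monodromy, i.e.\ topological, and neither of your proposed remedies meets it: for (i), with zeros accumulating at $0$ the Blaschke factor $B$ is an infinite product, not continuous at $0$, so the cofactor $g=(1-G)/B$ need not extend continuously to $\gamma$ or inherit the infinite-order vanishing, the argument of $B$ on $(-r,r)$ near $0$ is uncontrolled (so the sector condition for $\sqrt{g}$ on the diameter is lost), and $\sqrt{B}$ itself does not exist holomorphically when $B$ has an odd-order zero; for (ii), no Phragm\'en--Lindel\"of estimate can force the two branches to agree across a slit terminating at an odd-order zero, since they differ there by a sign regardless of growth. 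In short, unless you first prove that $f$ is zero-free near $0$ --- which is essentially what is to be shown --- the reduction to Lakner's theorem is blocked.

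Second, even granting the existence of $H$, the endgame does not close. Lakner's theorem yields only a sequence $x_n\to 0$ on $\gamma$ with $f(x_n)=0$; for a function merely continuous up to the boundary, boundary zeros accumulating at a point do not force $f\equiv 0$ (Luzin--Privalov-type uniqueness needs a boundary set of positive measure), so a substantive idea is still missing at this stage. The iterated-root/normal-family scheme cannot supply it: the iterates $H_k=(1-G)^{2^{-k}}$ converge locally uniformly (along subsequences) to a function of modulus $1$ wherever $1-G\neq 0$, so the limit retains neither the infinite-order vanishing at $0$ nor any trace of $f$; moreover Theorem \ref{alin1thm} applies to a fixed function whose boundary values lie in a fixed totally real submanifold, not to a limit of functions with shrinking cone targets, and continuity up to $\gamma$ of the limit is exactly what breaks. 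It is worth noting that the actual proof in \cite{bar1} exploits the \emph{one-sidedness} of the hypothesis $\re f\geq 0$ through a local Hopf-lemma/positivity argument for the harmonic function $\re f$ (compare \cite{baroth}), rather than conformal cone reduction; the resistance your route meets is consistent with the fact that the cone version is precisely the open conjecture this paper addresses, and even the paper's main result extracts only a dichotomy from cone-type hypotheses, not full vanishing.
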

\begin{theorem}[Huang et al\ \cite{huang}]\label{huangthm}   \sl
If $f = u+iv$ is holomorphic in $H^+:=\{\zeta\in \C:\hbox{Im} \, \zeta>0\},$
and continuous up to $(-1, 1)\subset \partial H^+$, such that 
$\abs{v(t)} \leq \abs{u(t)}$
for $t \in (-1, 1),$ and if $f$ vanishes to infinite order at $0$ (in the sense that $f(\zeta)=O(\abs{\zeta}^k),$ $H^+\ni \zeta\to 0$, $\forall k\in \N$), 
then $f \equiv 0$. 
\end{theorem}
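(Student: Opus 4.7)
The plan is to reduce Theorem \ref{huangthm} to the Baouendi--Rothschild result (Theorem \ref{bar1thm}) by a simple squaring trick. The hypothesis $\abs{v(t)}\leq \abs{u(t)}$ for $t\in(-1,1)$ is precisely the statement that $f(t)$ lies in the closed double cone $\{z\in\C:\abs{\im z}\leq \abs{\re z}\}$ of opening $\pi/2$ symmetric about the real axis. Squaring doubles arguments, so any such double cone is carried into a closed half plane; in particular, this cone is carried into the closed right half plane.

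Concretely, I would set $g(\zeta):=f(\zeta)^2$. Then $g$ is holomorphic on $H^+$, continuous up to $(-1,1)$, and for $t\in(-1,1)$ one computes
\[
\re g(t)\;=\;u(t)^2-v(t)^2\;\geq\;0.
\]
If $f(\zeta)=O(\abs{\zeta}^k)$ for every $k\in\N$ as $H^+\ni\zeta\to 0$, then $g(\zeta)=O(\abs{\zeta}^{2k})$ for every $k$, so $g$ still vanishes to infinite order at the origin. Restricting to a sufficiently small half disc $\{\abs{\zeta}<r,\,\im\zeta>0\}$ with $r<1$ places $g$ within the hypotheses of Theorem \ref{bar1thm}, which forces $g\equiv 0$ on that half disc. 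Since $g=f^2$ and $f$ is holomorphic, we obtain $f\equiv 0$ on the half disc, and the identity principle propagates this to all of $H^+$.

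In this situation there is no serious obstacle: the infinite-order vanishing is preserved (indeed doubled) under squaring, continuity up to the boundary is preserved, and the sector hypothesis becomes exactly the half-plane hypothesis that Theorem \ref{bar1thm} needs. The only conceptual step is recognizing that the opening angle $\pi/2$ of the cone $\abs{\im z}\leq\abs{\re z}$ is just right for the squaring reduction. The analogous reduction for the wider sector in the conjecture recalled in the abstract would require raising $f$ to a non-integer power, which is presumably the reason the general case with an arbitrary constant $C$ resists this elementary approach and is the object of the present paper's investigation.
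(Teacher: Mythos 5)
The paper does not actually prove this statement --- it is quoted from Huang et al.\ \cite{huang} as background --- so there is no in-paper argument to compare against; judged on its own, your reduction is correct. The function $g=f^2$ is holomorphic in $H^+$, continuous up to $(-1,1)$, satisfies $\re g(t)=u(t)^2-v(t)^2\geq 0$ there precisely because $\abs{v(t)}\leq\abs{u(t)}$, and inherits infinite-order vanishing at $0$ since $\abs{g}=\abs{f}^2=O(\abs{\zeta}^{2k})$; restricting to a half disc of radius $r<1$ (on whose closure $g$ is continuous, since the closed half disc lies in $H^+\cup(-1,1)$) puts $g$ squarely within the hypotheses of Theorem \ref{bar1thm}, whose sign condition is imposed only on the real-axis boundary values --- exactly what the squaring provides. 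Then $g\equiv 0$ forces $f\equiv 0$ on the half disc and, by the identity principle, on all of $H^+$. Your closing remark is also on target and consonant with the paper: for $\abs{v}\leq C\abs{u}$ with $C>1$ the cone has half-opening $\arctan C>\pi/4$, so squaring overshoots the right half plane (and a non-integer power of $f$ is not globally well defined on $H^+$, where $f$ may vanish or wind); this is precisely why the paper's main result reduces to the case $C>1$ and must work much harder, transporting the cone condition to an open subdomain via a Riemann map before invoking Theorem \ref{bar1thm}.
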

We also mention the following related result for harmonic functions.
\begin{theorem}[Baouendi \& Rothschild \cite{baroth}, Theorem 1, p.249]\label{barot}	\sl
Let $U\subset \Rn$ be an open neighborhood of $x_0 \in \partial B_0(1)$, where $B_0(1)$ denotes the unit ball centered at 0 
in Euclidean space $\Rn$.
Let $v$ be a harmonic function in $U\cap B_0 (1)$ and continuous on
$\overline{U\cap B_0 (1)}$ (where it is assumed that 
$U\cap B_0 (1)$ is connected).
Assume that, for each positive integer $N,$ the function $s\mapsto \abs{v(s)}/\abs{s-x_0}^{N}$ is integrable on $V=U\cap \partial B_0 (1).$
Then there exists a sequence of real numbers $\{a_j\}_{j\in \N}$ such that, for every positive integer $N$,
the following holds\footnote{The original theorem also ensures that there exists a $D \geq 0$ such that
$\left | a_j - \left [ (-1)^j M_j/(n\omega_n) \right ] \left ( \int_{V} v(s)/(\abs{s-x_0}^{n+2j}) \right ) d\sigma(s) \right | \leq D^{j+1}, \quad j\in \N$
where $M_j=(1/j!)(n/2)(n/2+1)\cdots (n/2+j-1),$ $\omega_n$ is the volume of $B$, and $d\sigma(s)$ is the surface measure on $\partial B_0(1).$} 
true:
$$
\frac{t^{(n-1)/2}}{1+t}v(tx_0)=\sum_{j=0}^N a_j\left(\frac{1-t}{\sqrt{t}}\right)^{2j+1} +O \biggl ( (1-t)^{2N+3} \biggr ),\quad t\to 1^-. \eqno (1.1)
$$
\end{theorem}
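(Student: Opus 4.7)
The plan is to combine a local Poisson-integral representation (which yields the coefficients explicitly) with a Schwarz reflection argument (which handles the piece of $v$ that is harmonic across $x_0$), then to perform a binomial expansion of the Poisson kernel along the radial segment $t\mapsto tx_0$.

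First, pick a cutoff $\phi\in C_c^\infty(V)$ with $\phi\equiv 1$ on some $V'\ni x_0$, extend $\phi v$ by zero to all of $\partial B_0(1)$, and let $v_1$ be the Poisson integral of $\phi v$ on $B_0(1)$; set $v_2:=v-v_1$. Then $v_2$ is harmonic in $U\cap B_0(1)$ and vanishes on $V'$, so the Kelvin-type Schwarz reflection $x\mapsto -|x|^{2-n}v_2(x/|x|^2)$ extends $v_2$ to a function harmonic in a full open neighborhood of $x_0$ in $\Rn$.

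For $v_1$, use the Poisson formula. Writing $\rho=|s-x_0|$ and $u=(1-t)/\sqrt{t}$, the identities $|tx_0-s|^2=(1-t)^2+t\rho^2$ and $1-t^2=(1-t)(1+t)$ give
$$\frac{t^{(n-1)/2}}{1+t}\,v_1(tx_0)=\frac{u}{n\omega_n}\int_V\frac{\phi(s)v(s)}{(u^2+\rho^2)^{n/2}}\,d\sigma(s),$$
visibly odd in $u$. Expanding $(1+u^2/\rho^2)^{-n/2}=\sum_{j=0}^N(-1)^jM_j(u/\rho)^{2j}+R_N$ and integrating term by term produces the candidate coefficients $a_j^{(1)}=\frac{(-1)^jM_j}{n\omega_n}\int_V\phi(s)v(s)\rho^{-(n+2j)}\,d\sigma(s)$, each finite by the hypothesis. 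For the remainder, split $V$ into $\{\rho\ge u\}$, where the tail estimate $|R_N|\lesssim (u/\rho)^{2N+2}$ applies directly, and $\{\rho<u\}$, where one uses $(u^2+\rho^2)^{-n/2}\le u^{-n}$ together with the crude bound on the partial sum. In both regimes the excess factor of $\rho^{-(n+2N+2)}$ is absorbed by the integrability of $|v|\,\rho^{-M}$ for every $M$, producing error $O(u^{2N+3})=O((1-t)^{2N+3})$.

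For $v_2$, the radial line $\{tx_0\}$ is invariant under Kelvin inversion with $t\leftrightarrow 1/t$, and the reflected extension satisfies $v_2(tx_0)=-t^{2-n}v_2(x_0/t)$ near $t=1$. With $\tau=\log t$ this says $t^{(n-2)/2}v_2(tx_0)$ is an odd real-analytic function of $\tau$; since also $u=-2\sinh(\tau/2)$ is odd in $\tau$, the quantity
$$\frac{t^{(n-1)/2}}{1+t}\,v_2(tx_0)=\frac{t^{(n-2)/2}v_2(tx_0)}{2\cosh(\tau/2)}$$
is odd and real-analytic in $u$ at $0$, so equals $\sum_{k\ge 0}b_k u^{2k+1}$ with remainder $O(u^{2N+3})$. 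Setting $a_j:=a_j^{(1)}+b_j$ yields the desired expansion; the footnote's quantitative bound on $a_j-\frac{(-1)^jM_j}{n\omega_n}\int_V v(s)\rho^{-(n+2j)}\,d\sigma(s)$ follows because the missing $(1-\phi)v$ contribution is supported where $\rho\ge\delta>0$ (giving geometric growth $\lesssim\delta^{-2j}$) and the $b_j$ admit a comparable bound from Cauchy estimates for the real-analytic $v_2$.

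The main obstacle is the uniformity of the Poisson remainder across the two regimes $\rho\gtrless u$: the binomial tail diverges near $\rho=0$, and only the full strength of the integrability hypothesis converts this into the required error bound. The reflection step, while technically milder, is the conceptual reason the expansion proceeds in odd powers of $(1-t)/\sqrt{t}$ rather than arbitrary powers of $1-t$.
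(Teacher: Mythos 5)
The paper itself gives no proof of Theorem \ref{barot}: it is quoted as background from Baouendi--Rothschild \cite{baroth}, so there is no in-paper argument to compare yours against. Judged on its own, your proof is correct, and it is essentially the argument of the cited source: the decomposition $v=v_1+v_2$ into the Poisson integral of a cut-off boundary value plus a piece that vanishes on a neighborhood of $x_0$ in the sphere and hence extends harmonically by the Kelvin reflection; the identity $\abs{tx_0-s}^2=(1-t)^2+t\rho^2$, which reduces the kernel on the radius to $u/(n\omega_n(u^2+\rho^2)^{n/2})$ with $u=(1-t)/\sqrt{t}$; the binomial expansion producing exactly the coefficients recorded in the footnote; and the $t\mapsto 1/t$ antisymmetry of $t^{(n-2)/2}v_2(tx_0)$, which forces odd powers of $u$ for the reflected piece. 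Two minor points. First, your two-regime split $\rho\gtrless u$ is sound but unnecessary: the Lagrange form of the binomial remainder gives $\abs{R_N(x)}\le M_{N+1}x^{N+1}$ uniformly for all $x\ge 0$ (since $(1+\xi)^{-n/2-N-1}\le 1$ for $\xi\ge 0$), so the single bound $u\int_V\abs{\phi v}\rho^{-n}(u/\rho)^{2N+2}\,d\sigma=u^{2N+3}\int_V\abs{\phi v}\rho^{-(n+2N+2)}\,d\sigma$ handles all of $V$ at once, using the integrability hypothesis with $M=n+2N+2$. Second, you should state explicitly that $v_1$ is continuous up to $\partial B_0(1)$ with boundary values $\phi v$ (true because $\phi v$, extended by zero, is continuous on the whole sphere); this is what guarantees that $v_2$ vanishes \emph{continuously} on $V'$, which the reflection principle requires.
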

Here the following definition is used.
\begin{definition}[Baouendi \& Rothschild \cite{baroth}]\label{barotdefvtio}  \rm
Let $U\subset\Rn$ be a neighborhood of some point $x_0\in \{x\in\Rn :\abs{x} =1 \}$ and set $\Omega =U\cap B_0 (1) .$
A continuous function, $v,$ is said to be {\it vanishing to infinite order at $x_0$}
if 
$$
\lim_{\Omega \ni x\to x_0 } \frac{v(x)}{\abs{x-x_0}^N} = 0   \eqno (1.2)
$$
for all $N > 0$.
The function $v$ is said to {\it vanish to infinite order in the normal direction at $x_0$} if
$$
\lim_{(0,1)\ni t\to \mathbf{1}} \frac{v(tx_0)}{\abs{1-t}^N}=0   \eqno (1.3)
$$
\end{definition}
It has been an open question whether it is possible to replace, 
in Theorem \ref{huangthm}, the inequality $\abs{v(t)} \leq \abs{u(t)}$
for $t \in (-1, 1),$ by an inequality of the form appearing in the Theorem of Lakner \cite{lakner} (Theorem \ref{laknerthm}), i.e.,
$\abs{v(t)} \leq C\abs{u(t)},$ for some $C>0.$ This was conjectured in Huang et al \cite{huang}.
{\em The purpose of this note is to investigate the conjecture}.
\begin{remark}	  \rm
Regarding the property of vanishing to infinite order, we point out the following.
Let $\omega\subset \C$ be a domain and let $f$ be a function continuous on $\overline{\omega}$. Assume that $f$ vanishes to infinite order at a point $p\in \partial \omega$,
in the sense of Theorem \ref{huangthm}, i.e.,
$f(\zeta)=O(\abs{\zeta-p}^k),$ $\overline{\omega}\ni \zeta\to p$, $\forall k\in \N.$
Note that, for any $p\in \omega,$ we have (sufficiently near $p$), $\abs{f(\zeta)}\cdot \abs{\zeta-p}^{-(k+1)} \leq C_{k+1}\Rightarrow \abs{f(\zeta)}\cdot \abs{\zeta-p}^{-k}\leq C_{k+1}\abs{\zeta-p}$;
thus, letting $\zeta\to p,$ we see that 
$$ 
\lim_{\overline{\omega}\ni \zeta\to p} \frac{f(\zeta)}{\abs{\zeta-p}^k}=0,\quad k\in \N,  \eqno (1.4)
$$
(where the case $k=0$ is due to the fact that $\abs{f(\zeta)}\leq C_1\abs{\zeta-p}\to 0$ as $\zeta\to p$).
\end{remark}

\section{Statement and proof of our main result}
\begin{proposition}[Main result]\label{pr}  
Let $\Delta^+$ denote the upper half disc in $\C$ and let
$\gamma=(-1,1)$ (viewed as an interval on the real axis in $\C$).
Assume that $F$ is a holomorphic function on $\Delta^+$ with continuous extension up to $\gamma,$ 
such that $F$ maps $\gamma$ into $\{|\im \zeta|\leq C|\re \zeta|\},$ for some positive $C.$ If $F$ vanishes to infinite order at $0$ (in the same sense as in Theorem \ref{huangthm}) then, either there is a sequence in $\Delta^+$, converging to $0,$ along which $\mbox{Im} F/\mbox{Re} F$ 
(defined where $\mbox{Re} F\neq 0$) is unbounded,
or $F$ vanishes identically. 
\end{proposition}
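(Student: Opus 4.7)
The plan is to proceed by contradiction: assume $F\not\equiv 0$ and exhibit a sequence in $\Delta^+$ tending to $0$ along which $\im F/\re F$ is unbounded. I would dichotomize according to whether the interior zeros of $F$ accumulate at the origin, and treat the two cases separately: the accumulation case by a local power-series argument, and the non-accumulation case by a reduction to the Baouendi--Rothschild theorem (Theorem~\ref{bar1thm}).

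In the accumulation case, pick $\zeta_k\in\Delta^+$ with $F(\zeta_k)=0$ and $\zeta_k\to 0$. Each $\zeta_k$ is an interior isolated zero of some finite order $n_k\geq 1$, so $F(\zeta)=c_{n_k}(\zeta-\zeta_k)^{n_k}+O((\zeta-\zeta_k)^{n_k+1})$ with $c_{n_k}\neq 0$. Choosing a direction $\theta_k$ with $\arg c_{n_k}+n_k\theta_k\equiv\pi/2\pmod\pi$ and a small radius $\varepsilon_k>0$, the point $\zeta_k':=\zeta_k+\varepsilon_k e^{i\theta_k}$ lies in $\Delta^+$ (since $\zeta_k$ is interior) and has $\arg F(\zeta_k')\approx\pi/2\pmod\pi$, so that $|\im F(\zeta_k')/\re F(\zeta_k')|$ is arbitrarily large; by additionally requiring $\varepsilon_k\leq|\zeta_k|$ I obtain $|\zeta_k'|\to 0$, yielding the required sequence.

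In the complementary case there is $r>0$ with $F$ nowhere vanishing on $W^+:=\Delta^+\cap\{|\zeta|<r\}$. The key step is to verify that on every sub-half-disc $W^+_\rho:=\Delta^+\cap\{|\zeta|<\rho\}$, $\rho\in(0,r]$, the ratio $\im F/\re F$ must be unbounded on $\{\re F\neq 0\}\cap W^+_\rho$; a diagonal extraction then produces a sequence $\zeta_k\to 0$ with the desired property. Suppose, toward a contradiction, that the ratio is bounded by some $M$ on $\{\re F\neq 0\}\cap W^+_\rho$. Since $F\not\equiv 0$ and infinite-order vanishing at $0$ excludes nontrivial constants, $\re F$ is not identically zero on $W^+_\rho$, so $\{\re F\neq 0\}$ is open and dense there; the bound $|\im F|\leq M|\re F|$ then extends to all of $W^+_\rho$ by continuity. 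Combined with $F\neq 0$ on $W^+_\rho$, this forces $\re F\neq 0$ everywhere on $W^+_\rho$; by connectedness $\re F$ has constant sign, say $\re F>0$, and continuity up to $(-\rho,\rho)$ gives $\re F\geq 0$ on this boundary interval. Theorem~\ref{bar1thm} then produces $F\equiv 0$ on $W^+_\rho$, hence on $\Delta^+$ by the identity principle, a contradiction.

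The main obstacle is the promotion step in the second case: translating the boundedness of $\im F/\re F$ --- a hypothesis that a priori only concerns the open set $\{\re F\neq 0\}$ --- into a genuinely sign-definite statement about $\re F$ throughout the half-disc, so that Baouendi--Rothschild can be invoked as a black box. The remaining ingredients, namely the local power-series computation near each interior zero and the verification that $W^+_\rho$ is an admissible domain for Theorem~\ref{bar1thm}, are routine.
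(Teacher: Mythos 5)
Your argument is correct in substance but takes a genuinely different route from the paper's. The paper dichotomizes on whether the cone condition $\abs{\im F}\leq K\abs{\re F}$ holds on \emph{some} simply connected subdomain $\widehat{B}\subset\Delta^+$ with $C^{0,\alpha}$ boundary through $0$ (Claim \ref{nyclaim1}); when it does, it builds a Riemann map $\Psi:\Delta^+\to\widehat{B}$ with H\"older boundary regularity and must separately check (Claim \ref{nyclaim}) that $F\circ\Psi$ still vanishes to infinite order before invoking Theorem \ref{bar1thm}. You instead dichotomize on whether the interior zeros of $F$ accumulate at $0$. In the non-accumulation case the only domains you need are the half-discs $W^+_\rho$, to which Theorem \ref{bar1thm} applies directly, so the entire conformal-transport apparatus disappears; your ``promotion step'' (dense validity of $\abs{\im F}\leq M\abs{\re F}$ plus nonvanishing of $F$ forces $\re F$ to be sign-definite up to the real axis) is exactly the mechanism of the paper's Lemma \ref{lm3}, executed on a domain where no change of variables is required. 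The accumulation case, which the paper's subdomain dichotomy absorbs implicitly, you handle by a local expansion at each interior zero. One small point to tighten there: choosing $\theta_k$ so that $\arg F(\zeta_k')$ equals $\pi/2\pmod\pi$ to leading order leaves open the possibility that $\re F(\zeta_k')=0$ exactly, where the ratio is undefined rather than large. The clean repair is to observe that, by the open mapping theorem, $F$ maps any small disc about $\zeta_k$ onto a neighborhood of $0$, hence attains nonzero purely imaginary values at some point $q$ there; since $\{\re F\neq 0\}$ is dense (as $\re F\not\equiv 0$), points arbitrarily near $q$ have $\re F$ nonzero but arbitrarily small relative to $\im F$, so the ratio is unbounded on every neighborhood of every interior zero. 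With that repair your proof is complete, and it is arguably more elementary than the paper's, at the price of proving slightly more than is asked (your argument never uses the cone hypothesis on $\gamma$).
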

\begin{proof}
Assume that there exists an $F\not \equiv 0,$ such that $F$ satisfies all other conditions in the statement of the proposition.
(By the result of Huang et al\ \cite{huang}, we may suppose that $1 < C < \infty$.)
Note that $F$ vanishing to infinite order at $0$ implies that, for each $j\in \N$, there is a $C_j> 0$
such that, near $0$,
$\abs{F(\zeta)}\abs{\zeta}^{-j} \leq C_j,$ which in turn implies that, near $0$, we have
$$
\frac{\abs{\re F(\zeta)}}{\abs{\zeta}^{j}}\leq \frac{\abs{F(\zeta)}}{\abs{\zeta}^j} \leq C_j. \eqno (2.1)
$$
Thus the function $\re F$ (and similarly $\im F$) also vanishes to infinite order at $0.$ The function $\re F$ ($\im F$) is harmonic on $\Delta^+$ and
continuous up to $\{y=0\}\cap \Delta$ (where $y=\im \zeta$). 
\\
\\
The strategy of the proof is to show that Lakner's cone condition (the requirement that $F$ map $\gamma$ into a double cone) can, under the additional condition of vanishing to infinite order at the origin, be transported to an appropriate {\em open} part of 
the the upper half disc.
\\
\\
In the following passage, let $p_0=0,$ denote our reference point, as we shall perform a change of coordinates.
\begin{remark}\label{nyremark} \rm
Let $\widehat{B}\subset\Delta^+,$ be a simply connected domain with boundary $\partial\widehat{B}\ni p_0,$ of class $C^{0,\alpha},$ for some $\alpha>0.$
By the Riemann mapping theorem (the homeomorphic extension to the boundary is well-known; see e.g., Taylor \cite{taylor}, p.342; see also Greene \& Krantz \cite{greenekrantz}, p.389) there exists a biholomorphism
$\Psi_2 :\widehat{B}\to \Delta$ which is $C^{0,\alpha}$ up to the boundary. Similarly let $\Psi_1 : \Delta^+ \to \Delta$ be a biholomorphic map ($C^{0,\alpha}$ up to the boundary). For any two points $p_1,p_2\in \partial \Delta,$ we can find a biholomorphic map $\Psi_3:\Delta\to \Delta$ ($C^{0,\alpha}$ up to the boundary) such that $\Psi_3(p_1)=p_2.$ Setting $p_1=\Psi_1(0)$ and $p_2:=\Psi_2(p_0)$
we obtain a biholomorphism $\Psi :\Delta^+\to  \widehat{B}$
($C^{0,\alpha}$ up to the boundary), such that $0=\Psi^{-1}(p_0),$ by defining
$\Psi:=(\Psi_2^{-1}\circ \Psi_3\circ \Psi_1).$ 
Hence $(F\circ\Psi)\in {\mathcal O}(\Delta^+)\cap C^{0,\alpha}(\overline{\Delta^+}).$
\end{remark}
\begin{claim}\label{nyclaim}  \rm
Let $\widehat{B},\Psi$ be as in Remark \ref{nyremark}.
Let $z$ be a holomorphic coordinate centered at $p_0=0,$ 
and set $\Psi^{-1}(z)=:\zeta.$ Then,
$\lim_{\zeta\to 0} (F\circ \Psi)(\zeta)/\zeta^k = 0 \, , \ \forall k\in \N.$ 
\end{claim}
\begin{proof}
Given the holomorphic coordinate $z$ centered at $p_0,$ 
and $\Psi^{-1}(z)=:\zeta,$ we have
$\zeta=\zeta(z),$ $\zeta(p_0)=0,$ 
and, by the infinite order vanishing of $F,$ at $z=p_0$, 
$$
\lim_{z\to p_0} \frac{\abs{F(z)}}{\abs{z-p_0}^j}
=0,\quad \forall j\in \N \, .	\eqno (2.2)
$$
Because $\Psi$ is of class $C^{0,\alpha}$ up to the boundary, and $\Psi (0)=p_0,$ we have,
for constants $\alpha>0,$ $c>0,$ that $\abs{\Psi (\zeta)-p_0}\leq c\abs{\zeta }^{\alpha}$. Whence, for\footnote{Here $\lceil \ \cdot \ \rceil$ denotes the least upper integer.}
$m:=\lceil \frac{1}{\alpha} \rceil,$ and a constant $c_0=c^m,$
$$
\abs{\Psi (\zeta)-p_0}^m\leq c_0\abs{ \zeta}. \eqno (2.3)
$$
This implies that for any $j\in \N$,  \\ 
$$
\lim_{\zeta\to 0}\frac{\abs{(F\circ \Psi)(\zeta)}}{\abs{\zeta}^{k}} = 
\lim_{\zeta\to 0}\frac{\abs{F(\Psi(\zeta))}}{\abs{\Psi(\zeta)-p_0}^{j}}\cdot \frac{\abs{\Psi(\zeta)-p_0}^j}{\abs{\zeta}^k} \leq
$$
$$
\left( \lim_{z\to p_0} \frac{\abs{F(z)}}{\abs{z-p_0}^{j}}\right)\cdot \left( \lim_{\zeta \to 0} \frac{\abs{\Psi(\zeta)-p_0}^j}{\abs{\zeta}^k} \right). \eqno (2.4)
$$ 
Inserting $j=m\cdot k$, in equation (2.4) we obtain, from equation (2.2) and equation (2.3),
$$
\lim_{\zeta\to 0}\frac{\abs{(F\circ \Psi)(\zeta)}}{\abs{\zeta}^{k}}\leq \left ( \lim_{z\to p_0} \frac{\abs{F(z)}}{\abs{z-p_0}^j}\right)
\cdot  \lim_{\zeta \to 0} \underbrace{\left(\frac{\abs{\Psi(\zeta)-p_0}^m}{\abs{\zeta}}\right)^k }_{\ \leq \ c_0^k} = 0 \, . \eqno (2.5)
$$ 

Hence we have verified that (the following limits as $\zeta\to 0$ exists),  
$\lim_{\zeta\to 0} (F\circ \Psi)(\zeta)/\zeta^k = 0,  \ \forall \  k\in \N.$ 
This completes the proof of Claim \ref{nyclaim}.
\end{proof}

\begin{lemma}\label{lm3}
Assume there exists a constant $K>0$ together with a simply connected domain $\widehat{B}\subseteq\Delta^+,$ with boundary $\partial\widehat{B}\ni 0,$ of class $C^{0,\alpha},$ for some $\alpha>0,$ such that, 
$$
\abs{\im F(p)}\leq K\abs{\re F(p)},\quad \forall p\in \widehat{B} . \eqno (2.6)
$$
Then $F\equiv 0.$
\end{lemma}
\begin{proof}
Let $K>0,$ be a constant together with a simply connected domain $\widehat{B}\subseteq\Delta^+,$ with $C^{0,\alpha}$ boundary and $0\in \partial\widehat{B},$ such that inequality (2.6) holds true. If $F\not\equiv 0$, then, the open mapping theorem implies that,
$$
F(p)\neq 0,\quad \forall p\in \widehat{B}, \eqno (2.7)
$$
indeed, by equation (2.6), the (necessarily open) image of the open $\widehat{B},$ under $F$, does not contain an open neighborhood of $0$.
Applying the same arguments of Remark \ref{nyremark} and Claim \ref{nyclaim}, we can find
a biholomorphism $\Psi:\Delta^+\to \widehat{B}$, which is $C^{0,\alpha}$ up to the boundary, such that $(F\circ\Psi)\in {\mathcal O}(\Delta^+)\cap C^{0,\alpha}(\overline{\Delta^+}),$ and such that $(F\circ \Psi)$ vanishes to infinite order at $0.$
Now $F\circ \Psi(\Delta^+)\subseteq F(\widehat{B})$.  Thus $(F\circ \Psi)$ also has image contained in $\{ \abs{\im \zeta} \leq K\abs{\re \zeta}\},$ implying that
$\abs{\im (F\circ \Psi)} \leq K\abs{\re (F\circ \Psi)}$ on $\Delta^+.$
This implies
$$
((F\circ \Psi)(p)\neq 0)\Rightarrow (\re (F\circ \Psi)(p)\neq 0), \quad \forall p\in \Delta^+. \eqno (2.8)
$$
Hence, $\re (F\circ \Psi)$ is a harmonic function on $\Delta^+$, continuous up to the boundary, vanishing to infinite order at $0$ (by Claim \ref{nyclaim} together with equation (2.1)), and nowhere zero on $\Delta^+$.
Hence $\re (F\circ \Psi)$ is either nonpositive or nonnegative, on $\Delta^+$. 
If $\re (F\circ \Psi)$ is nonnegative on $\Delta^+$, then application of Theorem \ref{bar1thm} gives $\re (F\circ \Psi)\equiv 0.$ This implies $(F\circ \Psi)\equiv$constant (by the open mapping theorem) thus by continuity $(F\circ \Psi)\equiv 0.$
If instead, $\re (F\circ \Psi)$ is nonpositive on $\Delta^+$, we can obtain the same conclusion, by replacing $\re (F\circ \Psi)$ by (the necessarily harmonic function) $-\re (F\circ \Psi)$.
In any case $(F\circ \Psi)\equiv 0$ implies that $F$ vanishes on the open set $\Psi(\Delta^+),$ thus $F\equiv 0.$
\end{proof}

\begin{observation}\label{revisobs} \rm
$F\not\equiv 0,$ implies that $\{p\in \Delta^+\colon \re F(p)\neq 0\}$ is nonempty, open, and dense (since the zero set $\{ \re F=0\}$ then has empty interior).
In fact we also have $\{ \re F|_{\gamma}\neq 0\}\subset \gamma$ is a relatively open, dense subset because the condition
$\abs{\im F(p)}\leq C\abs{\re F(p)} ,$ $\forall p\in \gamma$ implies that $\{ \re F|_{\gamma}=0\}$ has empty (relative) interior\footnote{For example it follows from the Luzin-Privalov theorem \cite{luzin}.} in $\gamma,$ when $F\not\equiv 0$. 
\end{observation}

\begin{claim}\label{nyclaim1}	\sl
If $F\not\equiv 0,$ then, either there is a simply connected subdomain, $\widehat{B},$ of $\Delta^+,$ with $C^{0,\alpha}$ boundary passing through $0,$ which satisfies the conditions of Lemma \ref{lm3} or
there is a sequence in $\Delta^+$, converging to $0,$ along which $\mbox{Im} F/\mbox{Re} F$ (defined where $\mbox{Re} F\neq 0$) is unbounded.
\end{claim}
\begin{proof}
Assume equation (2.6) fails for every simply connected subdomain, $\widehat{B},$ of $\Delta^+,$ with $C^{0,\alpha}$ boundary passing through $0.$
It is then obvious that for any increasing sequence $\{K_j\}_{j\in \Z_+},$ of integers, $\widehat{B},$ contains a sequence, $\{p_j\}_{j\in\Z_+}$, such that,
$$
\abs{\im F(p_j)}> K_j \abs{\re F(p_j)}. \eqno (2.9)  
$$ 
Furthermore, the inequality being strict implies that equation (2.9) remains valid on an open neighborhood, $U_j,$ of $p_j.$ If
$U_j\subset \{ \re F=0\},$ then $\re F\equiv 0,$ so $F$ is constant, and by continuity $F\equiv 0$ in which case equation (2.6) holds trivially, thus 
the conditions of Lemma \ref{lm3} are satisfied for any appropriate $\widehat{B}.$ So we can assume
$U_j\cap \{\re F\neq 0\}\neq \emptyset$, for all $j\in \Z_+,$ in which case we replace $p_j$ with a possibly different, point (which, after renaming, in order to retain our notation) again is denoted $p_j,$ satisfying equation (2.9) and such that $\re F(p_j)\neq 0$ (note that automatically also $\im F(p_j)\neq 0$).
Now this can be repeated after replacing $\widehat{B},$ by $\widehat{B}_k$, defined as a simply connected component of the interior of the connected component of $0,$ in 
$\overline{\widehat{B}} \cap \left\{ \abs{\im z}\leq \frac{1}{k}, |\re z|\leq \frac{1}{k} \right\}.$ Thus we can pick a diagonal sequence, again denoted $\{ p_j\}_{j\in \Z_+},$ such that dist$(p_j,0)\to 0.$
\\
\\
By Observation \ref{revisobs}, the set $\omega:=\{\re F \neq 0\},$ is nonempty, contains a dense open subset of
$\Delta^+,$ and contains a relatively open, relatively dense subset of $\gamma.$
The continuous function $g(z):=\abs{\im F(z)}/\abs{\re F(z)},$ $z\in \gamma\cap \omega,$ has a continuous extension to $\omega,$
which we shall denote by $G(z):=\abs{\im F(z)}/\abs{\re F(z)},$ $z\in \omega.$
By what we have already done, we know that, choosing $K_j=j,$ we obtain that, $G(p_j)$ is unbounded along $p_j$ and $p_j\to \infty,$ so we are done. On the other hand, assume no such sequence exists. Then by what we have done, we can conclude that the assumption that equation (2.6) fails for all possible choices of $\widehat{B}$, is false.
This completes the proof of Claim \ref{nyclaim1}.
\end{proof}

Lemma \ref{lm3} together with Claim \ref{nyclaim1} complete the proof of Proposition \ref{pr}.
\end{proof}


\bibliographystyle{amsplain}

\vspace*{.25in}

\noindent Department of Mathematics, Mid Sweden University, SE-851 70, Sundsvall, Sweden \\
{\tt abtindaghighi@gmail.com}
\medskip \\

\noindent Department of Mathematics, Washington University in St.
Louis, St. Louis, Missouri 63130, USA  \\
{\tt sk@math.wustl.edu}

\end{document}